\numberwithin{equation}{section}
\newtheorem{theorem}{Theorem}[section]
\begin{document}
\author{Alexander E Patkowski}
\title{More on some Mock theta Double sums}

\maketitle
\begin{abstract} We offer some further applications of some Bailey pairs related to some mock theta functions which were established in a recent study. We discuss and offer some double-sum $q$-series, with new relationships among mock theta functions. We also offer a new relationship between the Bailey pair of Bringmann and Kane with that of Andrews.  \end{abstract}

\keywords{\it Keywords: \rm Bailey pairs; Mock theta functions; $q$-series.}

\subjclass{ \it 2010 Mathematics Subject Classification 33D15, 11F37}

\section{Introduction}
Recently [14] we offered some new expansions involving indefinite quadratic forms for mock theta functions by establishing new Bailey pairs. This method had its beginnings with Andrews [1,2], and subsequent work
by Zwegers established the connection with real analytic modular forms [17]. Mortensen and Hickerson established a comprehensive study of double Hecke-type sums for mock theta functions [10], which allows one to 
establish precisely when a Hecke-type expansion ([8]) is a mock modular form. Lovejoy and Osburn have recently given examples of double-sum mock theta functions, and connections to known single-sum mock theta functions [13]. (See also the important related work on mock theta functions [6].) For standard $q$-series notation and background information on Bailey's lemma see [3, 16], and recall that $(x;q)_n=(x)_n:=(1-x)(1-xq)\cdots(1-xq^{n-1}).$ Throughout we put $q\in\mathbb{C},$ and $0<|q|<1.$
\par The paper is organized as follows. First we establish the main Bailey pairs which give our motivation for further comments on Mock theta functions, adding to the applications offered in [1]. In the following section, we establish some interesting new double sums and further considerations. Our proof of our main pair implies an alternative proof to the one given by Lovejoy and Osburn [14], that the pair of Andrews [1, Lemma 12] may be obtained by the pair of Bringmann and Kane [5, Theorem 2.3].

\section{Applications and Further considerations}
We say denote $(\alpha_n(a,q), \beta_n(a,q))$ to be a Bailey pair where
\begin{equation}\beta_n(a,q)=\sum_{0\le j\le n}\frac{\alpha_n(a,q)}{(q;q)_{n-j}(aq;q)_{n+j}}.\end{equation}
If $q$ is not raised to a power then we just write $(\alpha_n(a,q), \beta_n(a,q))=(\alpha_n, \beta_n).$ 
We write down [15, (S1)]: If $(\alpha_n, \beta_n)$ is a Bailey pair relative to $a$ then so is $(\alpha_n', \beta_n')$ where

\begin{equation}\alpha_n'=a^{n}q^{n^2}\alpha_n,\end{equation}
\begin{equation}\beta_n'=\sum_{0\le j\le n}\frac{a^{j}q^{j^2}}{(q)_{n-j}}\beta_j.\end{equation}
Lastly, we write down [15, (E1)]: If $(\alpha_n(a^2,q^2), \beta_n(a^2,q^2))$ is a Bailey pair relative to $a^2$ then so is $(\alpha_n'(a^4,q^4), \beta_n'(a^4,q^4))$ where
\begin{equation}\alpha_n'(a^4,q^4)=\alpha_n(a^2,q^2),\end{equation}
\begin{equation}\beta_n'(a^4,q^4)=\sum_{j\ge0}^{n}\frac{(-1)^{n-j}q^{2(n-j)^2}}{(-a^2q^2;q^2)_{2n}(q^4;q^4)_{n-j}}\beta_{j}(a^2,q^2).\end{equation}

\begin{theorem} The pair of sequences $(\dot{\alpha}_n(q,q), \dot{\beta}_n(q,q))$ form a Bailey pair where
\begin{equation}\dot{\alpha}_n(q,q)=q^{-n^2-n}(a_n(q^{1/2})+a_n(-q^{1/2})),\end{equation}
\begin{equation}\dot{\beta}_n(q,q)=\frac{2(-1)^n}{(q^2;q^2)_n(1-q^{2n+1})},\end{equation}
where,
$$a_n(q)=q^{3n^2+2n}\frac{(1-q^{2n+1})}{1-q^2}\sum_{|j|\le n}(-1)^jq^{-j^2}.$$

\end{theorem}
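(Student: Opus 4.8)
The plan is to obtain the pair $(\dot\alpha_n(q,q),\dot\beta_n(q,q))$ not by a frontal attack on the defining relation (2.1), but by feeding a known seed pair through the two transformations recorded above, (S1) and (E1), and then symmetrizing. The shape of the statement points squarely to this route: the factor $2$ in $\dot\beta_n$ and the combination $a_n(q^{1/2})+a_n(-q^{1/2})$ in $\dot\alpha_n$ are the fingerprints of adding a Bailey pair to its image under $q^{1/2}\mapsto-q^{1/2}$, the standard device for cancelling half-integral powers of $q$ and turning base-$q^2$ data into genuine base-$q$ data.

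For the seed I would take the Bringmann--Kane pair [5, Theorem 2.3], in which the $\alpha$-component is, up to the explicit prefactor $q^{3n^2+2n}(1-q^{2n+1})/(1-q^2)$, the two-sided theta-like sum $\sum_{|j|\le n}(-1)^jq^{-j^2}$; in other words $a_n(q)$ is arranged to be exactly this $\alpha$-component. Matching the seed's base against the target base $q$ forces a square root, and the two sheets $q^{1/2}$ and $-q^{1/2}$ are precisely what (E1) (which squares the base) and the ensuing symmetrization must reconcile. Applying (S1) with $a=q$ then accounts for the normalizing factor $q^{-n^2-n}$: since (S1) multiplies $\alpha_n$ by $a^nq^{n^2}=q^{n^2+n}$, running it in the inverse direction supplies $q^{-n^2-n}$, with the companion $\beta$ transforming through (2.3). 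Adding the two specializations produces $\dot\alpha_n(q,q)=q^{-n^2-n}(a_n(q^{1/2})+a_n(-q^{1/2}))$, in which every fractional power has cancelled.

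The $\alpha$-bookkeeping is essentially forced, so the real work is to carry the seed $\beta$ through the same chain and collapse it to $\dot\beta_n(q,q)=2(-1)^n/\big((q^2;q^2)_n(1-q^{2n+1})\big)$. Here the $(q^2;q^2)_n$ and the $(-1)^n$ are inherited from the base-$q^2$ structure of (E1), the surviving $1-q^{2n+1}$ comes from the factor $(1-q^{2n+1})/(1-q^2)$ in $a_n$, and the leading $2$ records the two equal contributions of the symmetrization. I expect this to be the main obstacle: after the base change and the (S1) summation one faces a nested sum over $j$ that must telescope, and I would try to evaluate its inner part as a terminating ${}_2\phi_1$ via the $q$-Chu--Vandermonde (or $q$-Gauss) summation, verifying separately that the odd-power cancellation in the $q^{1/2}\mapsto-q^{1/2}$ combination genuinely doubles the surviving terms.

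As both a consistency check and a fallback, I would verify (2.1) directly: substitute $\dot\alpha_j(q,q)$ into $\sum_{0\le j\le n}\dot\alpha_j(q,q)/\big((q;q)_{n-j}(q^2;q)_{n+j}\big)$, interchange the inner $|i|\le j$ summation with the Bailey summation, and evaluate the resulting $j$-sum in closed form; matching the two computations should reproduce $\dot\beta_n(q,q)$ and, as the introduction promises, also exhibit the passage from the Bringmann--Kane pair to the Andrews pair [1, Lemma 12].
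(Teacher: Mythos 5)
Your overall strategy---realize the theorem's pair as the image of a known pair built from $a_n$ under the transformations (S1) and (E1), with the combination $a_n(q^{1/2})+a_n(-q^{1/2})$ arising from reconciling bases---is the right kind of argument, and your reading of the $\alpha$-side bookkeeping (dividing by $a^nq^{n^2}=q^{n^2+n}$ when (S1) is run in reverse) matches what the paper does. But the route differs from the paper's in two substantive ways, and one of them leaves a real gap. First, the paper's seed is not the Bringmann--Kane pair itself but the Lovejoy--Osburn pair (2.8)--(2.9) relative to $(q^4,q^4)$ from [13], whose $\alpha_n$ already contains the symmetrized combination and whose $\beta_n$ already carries the factor $2$; the paper runs (E1) \emph{backwards} from this pair, using uniqueness of Bailey pairs to extract the intermediate pair $(\bar\alpha_n(q^2),\bar\beta_n(q^2))$ of (2.10)--(2.11). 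So the factor $2$ and the two-sheet combination are inherited from the seed, not produced by a symmetrization step you would have to justify separately; if you insist on starting from Bringmann--Kane alone, you must first prove that the symmetrized pair is again a Bailey pair at the right base, which is exactly the content of (2.10)--(2.11) and is not free.

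Second, and more seriously, the step you correctly flag as ``the main obstacle''---collapsing the transformed $\beta$-sum to $2(-1)^n/\bigl((q^2;q^2)_n(1-q^{2n+1})\bigr)$---is not a $q$-Chu--Vandermonde or $q$-Gauss evaluation. The identity actually needed is that $\sum_{j=0}^{n}\frac{q^{n-j}}{(q^2;q^2)_{n-j}(q;q^2)_{j+1}}$ equals $\sum_{j=0}^{n}\frac{(-1)^{j}q^{j(j+1)}}{(q)_{n-j}(q^2;q^2)_{j}(1-q^{2j+1})}$, which the paper obtains by comparing coefficients of $z^n$ in two generating functions whose equality is a nontrivial result quoted from [14, eq.\ (2.12)]. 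Neither side is a single terminating ${}_2\phi_1$, and the presence of the factor $1/(1-q^{2j+1})$ inside the sum puts it outside the reach of the classical summation theorems you name. Without this identity (or an equivalent), your argument does not close; with it, your plan becomes essentially the paper's proof read in the forward direction. Your fallback of verifying (2.1) directly runs into the same inner sum and so does not avoid the difficulty.
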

\begin{proof} We recall the recently found Bailey pair [13] $(\alpha_n, \beta_n)$ relative to $(q^4, q^4)$ where
\begin{equation}\beta_n(q^4,q^4)=\frac{2}{(-q^4;q^2)_{2n}(q^2;q^4)_{n+1}},\end{equation}
\begin{equation}\alpha_n(q^4,q^4)=q^{n(n+1)}\left(\frac{(-1)^nq^{2n^2+n}(1+q^{2n+1})}{(1-q^2)}\sum_{|j|\le n}q^{-j^2}+\frac{q^{2n^2+n}(1-q^{2n+1})}{(1-q^2)}\sum_{|j|\le n}(-1)^jq^{-j^2}\right).\end{equation}
If we choose (2.8)--(2.9) to be the left side of (E1), (2.4)--(2.5), then by the uniqueness of Bailey pairs we see that we would need the Bailey pair
\begin{equation}\bar{\alpha}_n(q^2)=(a_n(q)+a_n(-q)),\end{equation}
\begin{equation}\bar{\beta}_n(q^2)=2\sum_{j\ge0}^{n}\frac{q^{2(n-j)}}{(q^4;q^4)_{n-j}(q^2;q^4)_{j+1}},\end{equation}
on the right side. Put the $q^2\rightarrow q$ version of this pair on the left side of the $a=q$ case of (S1), (2.2)--(2.3), to get that this implies the Bailey pair 
in the theorem by the uniqueness of Bailey pairs. This follows from observing that $\bar{\beta}_n(q)$ is the coefficient of $z^n$ in 
$$\frac{1}{(zq;q^2)_{\infty}}\sum_{n\ge0}\frac{z^n}{(q;q^2)_{n+1}},$$
which is the coefficient of $z^n$ in (by [14, eq.(2.12)])
$$\frac{1}{(z)_{\infty}}\sum_{n\ge0}\frac{z^n(-1)^nq^{n(n+1)}}{(q^2;q^2)_n(1-q^{2n+1})},$$
which is 
$$\sum_{j\ge0}^{n}\frac{(-1)^{j}q^{j(j+1)}}{(q)_{n-j}(q^2;q^2)_{j}(1-q^{2j+1})}.$$
\end{proof}
Note: We may now see that the reverse process implies a proof of the Bailey pair of Andrews [1, Lemma 12] by using the pair of Bringmann and Kane [5, Theorem 2.3].
\\*
We now offer some special cases, which include new mock modular form double sums and a modular form as examples. Both mock theta functions may be found in [1, 3]. For this we need a simple form of Bailey's lemma [4]
\begin{equation}\sum_{n\ge0}(X)_n(Y)_n(aq/XY)^n\beta_n=\frac{(aq/X)_{\infty}(aq/Y)_{\infty}}{(aq)_{\infty}(aq/XY)_{\infty}}\sum_{n\ge0}\frac{(X)_n(Y)_n(aq/XY)^n\alpha_n}{(aq/X)_n(aq/Y)_n}.\end{equation}
\begin{theorem} We have that, 
\begin{equation} 2\sum_{n\ge0}\sum_{n\ge j\ge 0}\frac{(-1)^j q^{n^2+n+j(j+1)/2}}{(-q)_n(q)_{n-j}(q)_{j}(1-q^{2j+1})}=A(q^{1/2})+A(-q^{1/2}),\end{equation}
where 
$$A(q)=\sum_{n\ge0}^{\infty}\frac{q^{2n^2+2n}}{(-q)_{2n+1}},$$ and
\begin{equation} \sum_{n\ge0}\sum_{n\ge j\ge 0}\frac{(-1)^j q^{2n^2+2n+j^2+j}}{(-q)_{2n+1}(q^2;q^2)_{n-j}(q^2;q^2)_{j}(1-q^{2j+1})}=F_2(q^2),\end{equation}
where
$$F_2(q)=\sum_{n\ge0}\frac{q^{n^2+n}}{(q^{n+1})_{n+1}}.$$ 
\begin{equation} \sum_{n\ge0}\sum_{n\ge j\ge 0}\frac{(-1)^n q^{2n^2+2n+(n-j)^2}}{(-q)_{2n+1}(q^2;q^2)_{n-j}(q^2;q^2)_{j}(1-q^{2j+1})}=\frac{\phi(q^2)}{(-q^2;q^2)_{\infty}},  \end{equation}
where  $$\phi(q)=\sum_{n\ge0}\frac{q^{n(n+1)/2}}{(q;q^2)_{n+1}}.$$
And further,
\begin{equation} 2\sum_{n\ge0}\sum_{n\ge j\ge 0}\frac{(-1)^j q^{n(n+1)/2+j(j+1)/2}}{(q)_{n-j}(q)_{j}(1-q^{2j+1})}=\frac{(-q)_{\infty}}{(q)_{\infty}}((q)_{\infty}(q^{1/2};q^{1/2})_{\infty}+(q)_{\infty}^2(-q^{1/2};q)_{\infty}). \end{equation}
\end{theorem}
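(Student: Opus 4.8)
The plan is to obtain (2.17) as one further specialization of Bailey's lemma (2.12) applied to the Bailey pair of Theorem 2.1, reading the right-hand side off the $\alpha$-side of (2.12). The key observation is that the inner $j$-sum on the left of (2.17) is identical to the one already appearing on the left of (2.13): in both it is
\[
\beta^{\ast}_n=\sum_{0\le j\le n}\frac{(-1)^jq^{j(j+1)/2}}{(q)_{n-j}(q)_j(1-q^{2j+1})}.
\]
Thus (2.17) uses the very same Bailey pair $(\alpha^{\ast}_n,\beta^{\ast}_n)$ that produces (2.13) --- a pair built from $(\dot\alpha_n(q,q),\dot\beta_n(q,q))$ by the iterate (S1), (2.2)--(2.3), so that $\alpha^{\ast}_n$ inherits the splitting $a_n(q^{1/2})+a_n(-q^{1/2})$ of (2.6) --- and the two identities differ only in the auxiliary parameters $X,Y$ fed into (2.12). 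I would therefore take the construction of the left side of (2.13) as given and re-run it with the new parameters.

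Concretely, I would insert $(\alpha^{\ast}_n,\beta^{\ast}_n)$ into (2.12) and let $X,Y\to\infty$, so that the outer weight $(X)_n(Y)_n(aq/XY)^n$ degenerates to the half-Gaussian $q^{n(n+1)/2}$ (the half-integer exponent reflecting the square-root base entering through $a_n(\pm q^{1/2})$), in contrast with (2.13), where the weight is tuned to give $q^{n^2+n}/(-q)_n$, while the prefactor $\frac{(aq/X)_\infty(aq/Y)_\infty}{(aq)_\infty(aq/XY)_\infty}$ tends to a single infinite product. Combining this degenerate weight with $\beta^{\ast}_n$ and carrying the overall factor $2$ inherited from $\dot\beta_n(q,q)$ reproduces the double sum on the left of (2.17); this is routine $q$-Pochhammer bookkeeping, and matching it against (2.13) is a convenient check on the powers.

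The substance of the argument is the evaluation of the $\alpha$-side, $\mathrm{(prefactor)}\sum_{n\ge0}q^{n(n+1)/2}\alpha^{\ast}_n$. Writing $\alpha^{\ast}_n$ through $a_n(q)=q^{3n^2+2n}\frac{1-q^{2n+1}}{1-q^2}\sum_{|j|\le n}(-1)^jq^{-j^2}$ and splitting the sum into its $q^{1/2}$- and $(-q^{1/2})$-pieces, I would use the factor $1-q^{2n+1}$ to run an Abel summation against the partial theta sum $\sum_{|j|\le n}(-1)^jq^{-j^2}$ in each piece. This collapses the unilateral, indefinite $n$-sum into a complete bilateral theta series, whereupon Jacobi's triple product converts the two pieces into the genuine infinite products $(q^{1/2};q^{1/2})_\infty$ and $(-q^{1/2};q)_\infty$. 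Reassembling these with the infinite-product prefactor gives the two summands $(q)_\infty(q^{1/2};q^{1/2})_\infty$ and $(q)_\infty^2(-q^{1/2};q)_\infty$ scaled by $\frac{(-q)_\infty}{(q)_\infty}$, i.e. the right-hand side of (2.17). It is exactly this collapse of the Hecke-type sum into a definite theta that makes the answer modular, rather than the mock modular forms produced in (2.13)--(2.15).

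I expect the main obstacle to be this telescoping and theta evaluation on the $\alpha$-side: finding the Abel-summation kernel that makes both partial theta sums close up into complete thetas, and then pairing the two resulting products correctly with the prefactor so that the combination $(q)_\infty(q^{1/2};q^{1/2})_\infty+(q)_\infty^2(-q^{1/2};q)_\infty$ emerges. A secondary difficulty is keeping the half-integer powers of $q$ consistent through the $q\to q^{1/2}$ passages and confirming that the odd powers of $q^{1/2}$ recombine so that the stated product identity holds as a series in $q^{1/2}$.
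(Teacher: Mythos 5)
Your proposal addresses only the last of the four identities in the theorem (the one with right-hand side $\frac{(-q)_{\infty}}{(q)_{\infty}}((q)_{\infty}(q^{1/2};q^{1/2})_{\infty}+(q)_{\infty}^2(-q^{1/2};q)_{\infty})$); the first three, including the construction of the double sum in (2.13) that you ``take as given,'' are never proved, so at best this is a proof of one quarter of the statement. Even for that one identity the specialization of Bailey's lemma is wrong. With $a=q$, letting $X,Y\to\infty$ in $(X)_n(Y)_n(aq/XY)^n$ produces the full Gaussian $q^{n^2+n}$ and the prefactor $1/(q^2;q)_{\infty}$ --- that is exactly the choice that yields (2.13), not (2.16). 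To obtain the half-Gaussian weight and the factor $(-q)_{\infty}$ on the right you must take $X=-q$, $Y\to\infty$: then the weight is $(-q)_nq^{n(n+1)/2}$, whose $(-q)_n$ cancels the $(-q)_n$ sitting inside $\beta^{\ast}_n$ (which is why that Pochhammer symbol is absent from the left side of (2.16) but present in (2.13)), and the prefactor becomes $(aq/X)_{\infty}/(aq)_{\infty}=(-q)_{\infty}(1-q)/(q)_{\infty}$. Your attribution of the half-integer exponent to the $q^{1/2}$ inside $a_n(\pm q^{1/2})$ is a misdiagnosis; it comes entirely from the finite choice of $X$. (The paper also builds $\beta^{\ast}_n$ from Theorem 2.1 via Stanton's (S2), not (S1); with (S1) the inner sum would not acquire the factor $(-q)_j/(q^2;q^2)_j=1/(q)_j$ that produces $(q)_j$ in the denominator.)

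On the $\alpha$-side, the ``collapse of the indefinite sum into a genuine theta'' that you identify as the main obstacle is precisely the Kac--Peterson identity
\begin{equation*}
\sum_{n\ge0}q^{2n^2+n}(1-q^{2n+1})\sum_{|j|\le n}(-1)^jq^{-j^2}=(q)_{\infty}(q^2;q^2)_{\infty},
\end{equation*}
applied at bases $q^{1/2}$ and $-q^{1/2}$ (using $(-q^{1/2};-q^{1/2})_{\infty}=(q)_{\infty}(-q^{1/2};q)_{\infty}$ to recover the two stated products). The paper simply cites this; your plan to rederive it by ``Abel summation against the partial theta sum'' followed by the Jacobi triple product is not a routine computation --- the left side is an indefinite Hecke-type form, generically mock, and its modularity for these particular parameters is the whole content of the cited identity. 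As written, you have flagged the essential step as a difficulty rather than supplied it, so the argument is incomplete there as well. The structural intuition --- same $\beta^{\ast}_n$ as (2.13), different $X,Y$, theta rather than Appell--Lerch behaviour on the $\alpha$-side --- is correct, but the parameter choice must be fixed and the Kac--Peterson input either cited or actually proved.
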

\begin{proof} For (2.13) we use the $X, Y \rightarrow\infty$ case of (2.12) with the Bailey pair in Theorem 2.1 and [15, (S2)], then compare with [3, eq.(1.15)]. For (2.14) we use the $X, Y \rightarrow\infty$ case of (2.12) with the Bailey pair in Theorem 2.1 and [15, (E2)], then compare with [14, Theorem 2]. For (2.15) we use the $X, Y \rightarrow\infty$ case of (2.12) with the Bailey pair in Theorem 2.1 and [15, (E1)], then compare with [14, Theorem 1]. For (2.16) we use the $X=-q,$ $Y \rightarrow\infty,$ case of (2.12) with the Bailey pair in Theorem 2.1 and [15, (S2)], and then apply the well-known expansion of Kac and Peterson [9] $\sum_{n\ge0}q^{2n^2+n}(1-q^{2n+1})\sum_{|j|\le n}(-1)^jq^{-j^2}=(q)_{\infty}(q^2;q^2)_{\infty}.$
\end{proof}
We mention that (2.13) is a mock theta function, and bears such a close resemblance to the double sums included in Lovejoy and Osburn's list of Mock theta functions [13], that it is worth considering small variations of their functions as well. [15, (E2)] may similarly be applied to our Bailey pair using the $X=-q,$ $Y \rightarrow\infty,$ case to obtain a double sum for $\phi(q^2).$ \par
Now we consider the Hecke form
\begin{equation} \sum_{n\ge0}q^{(k+1)n^2+kn}(1-q^{2n+1})\sum_{|j|\le n}(-1)^j q^{-j^2}, \end{equation}
which has special cases $k=2, 3, 4, 6$ as known mock theta functions when multiplied by an appropriate modular form [3, 14].
Lovejoy and Osburn have noted that standard application of the Bailey chain does not imply a full general mock modular form [11]. Recall the double sum 
\begin{equation} f_{a,b,c}(x,y,q):=\left(\sum_{r,s\ge0}-\sum_{r,s<0}\right)(-1)^{r+s}x^{r}y^{s}q^{ar(r-1)/2+brs+cs(s-1)/2},\end{equation}
which is key to establishing connections to mock theta functions through the work done in [10].
We state a general result which we believe might be worth looking into further, as it contains (2.17) and mock theta expansions found in [3, 14]. Note that this result is also a corollary of a Bailey pair from Lovejoy's work [12, Theorem 1.2, $l=0,$ $k=1$].

\begin{theorem} \it We have for integers $k>1,$ 
$$\sum_{n_1\ge0}\cdots\sum_{n_{k}\ge0}\frac{q^{n_1^2+n_1+n_2^2+n_2+\cdots+n_k^2+n_k}(-1)^{n_k}}{(q)_{n_1-n_2}(q)_{n_2-n_3}\cdots(q)_{n_{k-1}-n_{k}}(q^2;q^2)_{n_{k}}}$$
\begin{equation}=\frac{1}{(q)_{\infty}}(f_{k,k+2,k}(q^{2k},q^{2k},q^2)+q^{2k+1}f_{k,k+2,k}(q^{4k+2},q^{4k+2},q^2)).\end{equation}
Further, for $k=2,3,4,6$ the multi-sum on the left side is a mixed mock modular form.
\rm
\end{theorem}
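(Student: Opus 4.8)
The plan is to read the left side of (2.20) as the result of iterating the Bailey chain on a single base pair and then performing one final summation through the limiting form of Bailey's lemma. I would start from the base pair indicated by the author, Lovejoy [12, Theorem 1.2, $l=0$, $k=1$], taken relative to $a=q$: its $\beta$-member is the innermost factor $\beta_n^{(0)}=(-1)^n/(q^2;q^2)_n$ of (2.20), and its $\alpha$-member $\alpha_n^{(0)}$ is the attached indefinite (Hecke-type) form, of the same shape $q^{n^2}(1-q^{2n+1})\sum_{|j|\le n}(-1)^j q^{-j^2}$ that already occurs in Theorem 2.1.

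First I would apply $(S1)$, i.e. (2.2)--(2.3) with $a=q$, exactly $k-1$ times. Each pass multiplies $\alpha_n$ by $q^{n^2+n}$ and convolves $\beta$ with $q^{j^2+j}/(q)_{n-j}$, so it contributes precisely one factor $q^{n_i^2+n_i}$ and one denominator $(q)_{n_{i-1}-n_i}$. After $k-1$ passes the resulting $\beta^{(k-1)}_{n_1}$ is exactly
$$\sum_{n_1\ge n_2\ge\cdots\ge n_k\ge0}\frac{q^{(n_2^2+n_2)+\cdots+(n_k^2+n_k)}(-1)^{n_k}}{(q)_{n_1-n_2}\cdots(q)_{n_{k-1}-n_k}(q^2;q^2)_{n_k}},$$
while $\alpha^{(k-1)}_n=q^{(k-1)(n^2+n)}\alpha_n^{(0)}$. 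Summing $\beta^{(k-1)}_{n_1}$ against the remaining weight $q^{n_1^2+n_1}$ is the $X,Y\to\infty$ instance of (2.12) relative to $a=q$; it supplies both that last factor and an infinite-product prefactor, and reproduces the whole left side of (2.20), now identified with a constant multiple of the Hecke form
$$\sum_{n\ge0}q^{k(n^2+n)}\alpha_n^{(0)}=\sum_{n\ge0}q^{(k+1)n^2+kn}(1-q^{2n+1})\sum_{|j|\le n}(-1)^j q^{-j^2},$$
namely (2.17). This is the clean part and fixes the exponent $(k+1)n^2+kn$.

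The remaining, and principal, task is to rewrite the Hecke form (2.17) as the Hickerson--Mortensen double sums (2.19) of the right side. Here I would pass to the variables $r=n-j$, $s=n+j$, under which $(k+1)n^2+kn-j^2$ becomes an indefinite binary form in $(r,s)$ whose cross term is governed by $k+2$; since $(k+2)^2>k^2$ this is exactly the indefinite/mock regime, and it forces $(a,b,c)=(k,k+2,k)$ with base $q^2$. The two summands of $(1-q^{2n+1})$ then split the result into $f_{k,k+2,k}(q^{2k},q^{2k},q^2)$ and $q^{2k+1}f_{k,k+2,k}(q^{4k+2},q^{4k+2},q^2)$, the arguments $q^{2k}$ and $q^{4k+2}$ coming from the linear terms and the $q^{2n+1}$ shift; this manipulation, following [10], must also reconcile the infinite-product prefactor from Bailey's lemma with the stated $1/(q;q)_\infty$. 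Finally, for $k=2,3,4,6$ the paper has already noted that (2.17) is a classical mock theta function up to a modular factor, so the criterion of [10] certifies the $f$-combination, hence the multi-sum, as mixed mock modular.

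I expect this last conversion to be the main obstacle, for two reasons: the map $r=n-j$, $s=n+j$ covers only lattice points of a fixed parity, so the boundary terms created by $(1-q^{2n+1})$ must be matched carefully against the octant difference $\big(\sum_{r,s\ge0}-\sum_{r,s<0}\big)$ defining $f_{a,b,c}$; and the precise infinite-product normalization (the passage to $1/(q;q)_\infty$) depends on Lovejoy's exact normalization of $\alpha_n^{(0)}$. A practical safeguard is to verify the identity to several orders in $q$ for small $k$ before committing to the bijection.
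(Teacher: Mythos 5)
Your plan is essentially the paper's own proof: you start from the same Bailey pair $\alpha_n=\frac{q^{n^2}(1-q^{2n+1})}{1-q}\sum_{|j|\le n}(-1)^jq^{-j^2}$, $\beta_n=(-1)^n/(q^2;q^2)_n$ relative to $a=q$ (the paper cites it from Hickerson--Mortenson [10], noting it is also Lovejoy's [12]), iterate the Bailey chain $k-1$ times and finish with the $X,Y\to\infty$ case of (2.12) to produce the multi-sum and the Hecke form (2.17) with prefactor $1/(q)_\infty$, and then convert (2.17) into the two $f_{k,k+2,k}$ terms by the change of variables $n=r+s$, $j=r-s$ (the inverse of your substitution), with the $1$ and $-q^{2n+1}$ pieces giving the two summands. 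The only substantive difference is that you explicitly flag the parity/normalization subtleties in that last conversion, which the paper passes over silently; your outline is otherwise the same argument.
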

\begin{proof} We take the Bailey pair relative to $q$ from the paper [10], where
$$\alpha_n=\frac{q^{n^2}(1-q^{2n+1})}{1-q}\sum_{|j|\le n}(-1)^j q^{-j^2},$$
$$\beta_{n}=\frac{(-1)^n}{(q^2;q^2)_n},$$

 and insert it into the Bailey chain [4], and then note that (2.17) may be written in the form (2.18) as
$$\sum_{n\ge0}\sum_{|j|\le n}(-1)^jq^{(k+1)n^2+kn^2-j^2}-\sum_{n<0}\sum_{|j|\le -n-1}(-1)^jq^{(k+1)n^2+kn^2-j^2}$$
$$=\left(\sum_{r,s\ge0}-\sum_{r,s<0}\right)(-1)^{r+s}q^{Q_k(r,s)}+\left(\sum_{r,s\ge0}-\sum_{r,s<0}\right)(-1)^{r+s}q^{Q_{k}(r,s)+l_k(r,s)},$$
where we have computed $Q_k(r,s)=kr^2+(k+2)2rs+ks^2+k(r+s),$ and $l_k(r,s)=(2k+2)(r+s)+(2k+1),$ from making the substitutions $n=r+s$ and $j=r-s.$
Now this is equal to
$$f_{2k,2(k+2),2k}(q^{2k},q^{2k},q)+q^{2k+1}f_{2k,2(k+2),2k}(q^{4k+2},q^{4k+2},q)$$
$$=f_{k,k+2,k}(q^{2k},q^{2k},q^2)+q^{2k+1}f_{k,k+2,k}(q^{4k+2},q^{4k+2},q^2).$$
The last part of the theorem follows from noting which modular form needs to be multiplied by (2.17) to obtain a mock modular form, using known identities in the literature and that have been discussed herein.
\end{proof}
It is reasonable to conjecture that for every natural $k$ greater than $1,$ Theorem 2.3 gives rise to a mixed mock modular form. In Mortensen's and Hickerson's paper, it is noted about their [10, Theorem 0.9] that in the $k=1$ case, we may obtain three second order and eight eighth order mock theta functions (see [7] for eighth order functions). Another observation and application of our Theorem 2.1 follows from [15, (D1)] and is worth mentioning here. Namely, 
$$2\sum_{n\ge0}\sum_{n\ge j\ge 0}\frac{(q)_n(-q;q^2)_{j+1}(-1)^{n+j}q^{n(n+1)/2+n-j}}{(q^2;q^2)_{n-j}(-q^2;q^2)_{j}(1-q^{4j+2})}=\sum_{n\ge0}\frac{q^{n(n+1)/2}}{(-q)_n}$$
$$+\sum_{n\ge0}q^{n(3n+1)/2}(1+q^{2n+1})\sum_{|j|\le n}q^{-j^2}.$$
See reference [5] for an introduction to the distinct rank parity function, its connection with $\mathbb{Q}(\sqrt{6}),$ and similar functions related to real quadratic fields.

1390 Bumps River Rd. \\*
Centerville, MA
02632 \\*
USA \\*
E-mail: alexpatk@hotmail.com
\end{document}